
\documentclass[11pt]{article}
\usepackage{amsmath}
\usepackage{amsfonts}
\usepackage{amsthm}
\usepackage{amssymb}
\usepackage{latexsym}
\usepackage{mathrsfs}

\pagestyle{plain}                                                      
\setlength{\textwidth}{6.5in}     
\setlength{\oddsidemargin}{0in}   
\setlength{\evensidemargin}{0in}  
\setlength{\textheight}{8.5in}    
\setlength{\topmargin}{0in}       
\setlength{\headheight}{0in}      
\setlength{\headsep}{0in}         
\setlength{\footskip}{.5in}       
\bibliographystyle{plain}                                              


\def\Goo{G_{\infty}}

\def\V{\mathcal{V}}

\def\Htw{\widetilde{H}}
\def\H{\mathcal{H}}

\def\GL{\mathrm{GL}}
\def\SL{\mathrm{SL}}
\def\PGL{\mathrm{PGL}}

\def\eps{\mathrm{\epsilon}}
\def\cusp{\mathrm{cusp}}

\def\Z{\mathbf{Z}}
\def\C{\mathbf{C}}
\def\Q{\mathbf{Q}}

\def\OL{\mathcal{O}}
\def\R{\mathbf{R}}

\def\G{\mathbf{G}}
\def\Ghat{\widehat{G}_{\infty}}
\def\A{\mathbf{A}}
\def\Af{\A_{\kern-0.1em{f}}}
\def\Apf{\A^{\p}_{\kern-0.1em{f}}}
\def\Kf{K_{\kern-0.1em{f}}}
\def\p{\mathfrak{p}}
\def\q{\mathfrak{q}}
\def\La{\Lambda}
\def\Co{\mathcal{C}}
\def\calL{\mathscr{L}}
\def\Koo{K_{\infty}}

\newtheorem{theorem}{Theorem}[section]

\newtheorem{question}[theorem]{Question}
\newtheorem{conj}[theorem]{Conjecture}
\newtheorem{lemma}[theorem]{Lemma}
\newtheorem{remark}[theorem]{Remark}
\newtheorem{example}[theorem]{Example/Question}

\numberwithin{equation}{section}

\title{Bounds for Multiplicities of
Unitary Representations of Cohomological
Type in Spaces of Cusp Forms}
\author{Frank Calegari \and
Matthew Emerton}

\begin{document}
\maketitle

\section{Introduction}

Let $\Goo$ be a semisimple real Lie group
with unitary dual $\Ghat$.
The goal of this note is to produce new upper
bounds for the multiplicities with which 
representations
$\pi \in \Ghat$ of cohomological type
appear in certain spaces of cusp forms on $\Goo$.

More precisely,
we suppose that $\Goo := \G(\R\otimes_{\Q} F)$
for some connected semisimple linear algebraic group
$\G$ over a number field $F$.
Let $\Koo$ be a maximal compact subgroup of $\Goo$.
We fix an embedding $\G \hookrightarrow \GL_N$ for some $N$,
and for any ideal $\mathfrak q$ of $\OL_F$, we let
$G(\mathfrak q)$ denote the intersection of $\Goo$
with the congruence subgroup of $\GL_N(\OL_F)$ of full level
$\mathfrak q$.
We also fix an arithmetic lattice $\Gamma$ in $\Goo$
(i.e.\ a subgroup commensurable with the congruence
subgroups $G(\mathfrak q)$) 
and write $\Gamma(\q) := \Gamma \bigcap G(\q)$.
For any $\pi \in \Ghat$,
let $m(\pi,\Gamma(\q))$
denote the multiplicity with which $\pi$ occurs in
the decomposition of the regular representation of
$\Goo$ on $L^2_{\cusp}(\Gamma(\q) \backslash \Goo)$. 
Let $V(\q)$  denote the volume of the arithmetic quotient
$\Gamma(\q)\backslash \Goo$.

In terms of this notation, we may state our main results.

\begin{theorem} 
Let $\p$ be a prime ideal in $\OL_F$.
Let $\pi \in \Ghat$ be  of cohomological type. Suppose either that
$\Goo$ does not admit discrete series, or, if
 $\Goo$ admits discrete series,  that
$\pi$ contributes to cohomology in degrees other than
$\dfrac{1}{2}\dim(\Goo/\Koo)$.
Then
$$m(\pi,\Gamma(\p^k)) \ll V(\p^k)^{1 - 1/\dim(\Goo)}$$
as $k \rightarrow \infty$,
with the implied constant depending on 
$\Gamma$ \emph{and} $\p$.
\label{theorem:main}
\end{theorem}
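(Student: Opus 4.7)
The strategy is to reduce the multiplicity bound to a codimension statement for completed $p$-adic cohomology along the $\p$-tower, and then exploit the non-middle-degree hypothesis via Poincar\'e duality. Since $\pi$ is cohomological, Matsushima's formula exhibits the $\pi$-isotypic contribution to cuspidal cohomology $H^i_{\cusp}(\Gamma(\p^k),W)$ as $m(\pi,\Gamma(\p^k))$ copies of the finite-dimensional $(\mathfrak{g},\Koo)$-cohomology $H^i(\mathfrak{g},\Koo;\pi\otimes W)$, for a suitable coefficient system $W$. By our hypothesis there exists a degree $i \ne \tfrac{1}{2}\dim(\Goo/\Koo)$ in which $H^i(\mathfrak{g},\Koo;\pi\otimes W)\ne 0$, so we are reduced to proving
\[
\dim_\C H^i_{\cusp}\bigl(\Gamma(\p^k),W\bigr) \;\ll\; V(\p^k)^{1 - 1/\dim(\Goo)}.
\]

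Next, I would fix a tame level away from $\p$ and form the completed cohomology
\[
\Htw^i \;:=\; \varprojlim_s \varinjlim_k H^i\bigl(\Gamma(\p^k),\Z/p^s\Z\bigr)
\]
in the sense of Emerton. By his fundamental result $\Htw^i$ is a finitely generated module over the Iwasawa algebra $\La := \Z_p[[K_\p^{(k_0)}]]$ of a sufficiently deep pro-$p$ analytic subgroup, and mod-$p$ cohomology at level $\p^k$ is controlled by the $\mathbf{F}_p$-dimensions of the Iwasawa coinvariants of $\Htw^i \otimes \mathbf{F}_p$. Using the elementary inequality $\dim_\C H^i(\Gamma,W_\C) \le \dim_{\mathbf{F}_p} H^i(\Gamma,W_\Z/p)$ valid for torsion-free $\Gamma$, it suffices to bound these coinvariants.

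The heart of the argument is to show that, under our hypothesis, the cuspidal part of $\Htw^i$ has codimension at least $1$ as a $\La$-module. Granting this, the standard estimate for coinvariants of a finitely generated $\La$-module of positive codimension yields an upper bound of the desired shape, after comparing $[\La : \La\cdot I_k]$ with $V(\p^k)$ and observing that the $p$-adic analytic dimension of $K_\p$ does not exceed $\dim(\Goo)$. The codimension bound itself is the main obstacle. The exclusion of the middle degree is essential here: Poincar\'e duality on completed cohomology pairs degree $i$ with degree $\dim(\Goo/\Koo)-i$, and only the asymmetry $i \ne \tfrac{1}{2}\dim(\Goo/\Koo)$, combined with the Borel--Wallach classification of $(\mathfrak{g},\Koo)$-cohomology of cohomological representations, allows one to deduce that $\Htw^i$ cannot be faithful as a $\La$-module. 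In the middle-degree case $\Htw^i$ can genuinely be $\La$-faithful---as happens for weight-two holomorphic modular forms---and the present method collapses.
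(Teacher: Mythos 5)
Your framework — pass to completed cohomology $\Htw^i$ over a sufficiently small pro-$p$ subgroup, use its finite generation over the Iwasawa algebra $\La$, and convert a codimension/corank statement into a power-saving bound on finite-level cohomology — is exactly the paper's skeleton. The crucial divergence, and the gap, is in how you propose to establish that the relevant $\La$-module has codimension $\geq 1$ (equivalently, corank zero).

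The paper deduces corank zero from an analytic input: the DeGeorge--Wallach and Savin limit multiplicity theorems, which say $m(\pi,\Gamma(\q)) = o(V(\q))$ for any fixed non-discrete-series $\pi$, together with the Rohlfs--Speh bound on the Eisenstein part. The middle-degree hypothesis enters only to ensure that \emph{no discrete series} contributes to $H^n_{\cusp}$ (by Borel--Wallach, discrete series have $(\mathfrak g,\mathfrak k)$-cohomology concentrated in degree $\frac{1}{2}\dim(\Goo/\Koo)$), so the DeGeorge--Wallach $o(V)$ estimate applies to every contributing representation. Then Theorem~\ref{thm:rank} of the paper (built from Harris's theorem and the Hochschild--Serre-type spectral sequence) shows that $\dim H^n(Y_k,\V_k)_E = r_n c\, p^{dk} + O(p^{(d-1)k})$; since the left side is $o(p^{dk})$, one concludes $r_n = 0$, and a second application of the same theorem gives the $O(p^{(d-1)k})$ power saving. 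This is a genuine bootstrap: the weak qualitative statement is upgraded via the rigid structure of finitely generated $\La$-modules.

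Your proposal instead tries to prove $\Htw^i$ is non-faithful directly via ``Poincar\'e duality on completed cohomology pairs degree $i$ with degree $\dim(\Goo/\Koo)-i$, and only the asymmetry $i \ne \tfrac{1}{2}\dim(\Goo/\Koo)$ $\ldots$ allows one to deduce that $\Htw^i$ cannot be faithful.'' This does not follow. Poincar\'e duality would relate the $\La$-module structures of $\Htw^i$ and $\Htw^{D-i}$, but nothing in a duality pairing forces either side to have positive codimension when $i \ne D - i$; both could \emph{a priori} have corank $r > 0$ in dual positions. The torsion-freeness/non-faithfulness of $\Htw^i$ in non-middle degree is precisely a nontrivial theorem (equivalent to the vanishing of corank), and in the paper its proof requires the analytic limit-multiplicity input from DeGeorge--Wallach and Savin; no purely structural duality argument is known to replace it. Without supplying that input, your reduction to ``codimension $\geq 1$'' names the correct target but leaves the essential step unproved. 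You should also note a second, minor issue: you switch from $E$-coefficients to $\mathbf{F}_p$-coefficients and propose bounding $\mathbf{F}_p$-dimensions of coinvariants, which risks interference from $\La$-torsion in $\Htw^i$; the paper stays in the Banach/$\La_E$ setting and counts $G_k$-invariants via Harris's theorem, which sidesteps torsion entirely.
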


\begin{theorem} 
Let $\p$ be a prime ideal in $\OL_F$.
Let $W$ be a finite-dimensional representation of $\Goo$,
and let $\V_{W,k}$ denote the local system on $\Gamma(\p^k) \backslash \Goo$
induced by $W$ (assuming that $k$ is taken large enough for
$\Gamma(\p^k)$ to be torsion free).
Let $n \geq 0$,
and if $\Goo$ admits discrete series, then suppose furthermore
that $n \neq \dfrac{1}{2}\dim(\Goo/\Koo)$.
Then
$$\dim H^n\bigl(\Gamma(\p^k)\backslash \Goo,\V_{W,k}\bigr)
\ll V(\p^k)^{1 - 1/\dim(\Goo)}$$
as $k \rightarrow \infty$,
with the implied constant depending on 
$\Gamma$ \emph{and} $\p$.
\label{theorem:second main}
\end{theorem}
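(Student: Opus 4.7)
My plan is to reduce Theorem \ref{theorem:second main} to the multiplicity bound of Theorem \ref{theorem:main}, using Matsushima's formula for cuspidal cohomology together with a separate analysis of the Eisenstein contribution. I would begin by decomposing
\[
H^n(\Gamma(\p^k)\backslash \Goo, \V_{W,k}) \;=\; H^n_{\cusp}(\Gamma(\p^k)\backslash \Goo, \V_{W,k}) \;\oplus\; H^n_{\mathrm{Eis}}(\Gamma(\p^k)\backslash \Goo, \V_{W,k})
\]
and bounding each summand separately: the cuspidal part directly from Theorem \ref{theorem:main}, and the Eisenstein part by induction on $\dim \Goo$.

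For the cuspidal summand, Matsushima's formula gives
\[
H^n_{\cusp}(\Gamma(\p^k)\backslash \Goo, \V_{W,k}) \;\simeq\; \bigoplus_{\pi \in \Ghat} m(\pi,\Gamma(\p^k)) \cdot H^n(\mathfrak{g},\Koo; \pi \otimes W).
\]
The $(\mathfrak{g},\Koo)$-cohomology vanishes unless $\pi$ is of cohomological type with infinitesimal character matching that of $W^\vee$; by the Vogan--Zuckerman classification only finitely many such $\pi$ exist. Because discrete series contribute $(\mathfrak{g},\Koo)$-cohomology only in middle degree $\tfrac{1}{2}\dim(\Goo/\Koo)$, the hypothesis on $n$ ensures every contributing $\pi$ falls within the scope of Theorem \ref{theorem:main}. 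Applying that theorem to each of the finitely many $\pi$ and summing yields the desired bound on $\dim H^n_{\cusp}$.

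The Eisenstein part is where the main difficulty lies. I would handle it via the Borel--Serre compactification of the arithmetic quotient: its boundary is a union of strata, each fibered over an arithmetic quotient of a Levi $M_\infty$ of a proper parabolic, with nilmanifold fibers modeled on the unipotent radical $U_\infty$. A Hochschild--Serre argument, combined with Kostant's computation of $H^\ast(\mathfrak{u}_\infty, W)$ as an $M_\infty$-representation, reduces the cohomology of each boundary stratum to cohomology of $(\Gamma(\p^k) \cap M_\infty) \backslash M_\infty$ with coefficients in appropriate irreducible $M_\infty$-representations, in a range of degrees determined by $n$. By induction on $\dim \Goo$, each such contribution is bounded by $V_M(\p^k)^{1-1/\dim M_\infty}$; since $V_M(\p^k)/V(\p^k)$ decays as a positive power of $N\p^k$ (reflecting the volume contribution of $U_\infty$ at level $\p^k$), this is far smaller than the target $V(\p^k)^{1-1/\dim \Goo}$. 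The delicate point will be to verify that the inductive hypothesis applies uniformly across all Levis $M_\infty$: if one lands on a middle degree for some $M_\infty$ admitting discrete series, one must invoke separate growth estimates for those discrete series multiplicities, which are in any case controlled by the much smaller volume $V_M(\p^k)$.
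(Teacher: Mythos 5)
Your proposal inverts the logical structure of the paper, and this inversion is fatal. The paper explicitly deduces Theorem~\ref{theorem:main} \emph{from} Theorem~\ref{theorem:second main} (via Lemma~\ref{lem:passage} at the end of Section~\ref{section:cohomology}), not the other way around. You propose to prove Theorem~\ref{theorem:second main} by invoking Theorem~\ref{theorem:main} for the cuspidal part, but no independent proof of Theorem~\ref{theorem:main} is available: the power saving $V(\p^k)^{1-1/\dim \Goo}$ for tempered, non-discrete-series $\pi$ of cohomological type (the only interesting case, and the one that motivated the paper) does not come from any matrix-coefficient estimate. For such $\pi$ one has $p(\pi)=2$, so the Sarnak--Xue machinery yields only the trivial bound $\ll V(\p^k)$, and DeGeorge--Wallach/Savin give only the qualitative $o(V(\p^k))$. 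The quantitative saving is extracted in the paper by a genuinely different mechanism: the completed cohomology $\Htw^n(\V)$ is an admissible $G$-representation, Theorem~\ref{thm:rank} (via the Emerton spectral sequence of Theorem~\ref{thm:ss} and Harris's Theorem~\ref{theorem:harris}) gives $\dim_E H^n(Y_k,\V_k)_E = r_n c p^{dk} + O(p^{(d-1)k})$ with $r_n$ the corank, the qualitative $o(p^{dk})$ input forces $r_n=0$, and then the error term $O(p^{(d-1)k})$ becomes the main term. This works at the level of whole cohomology groups, not individual multiplicities, and is why Theorem~\ref{theorem:second main} is proved first. Your reduction to Theorem~\ref{theorem:main} is therefore circular relative to anything in this paper or its references.

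Two secondary remarks. First, even granting Theorem~\ref{theorem:main}, your treatment of the Eisenstein contribution is a substantial inductive program via the Borel--Serre boundary and Kostant's theorem, which you sketch but do not carry out; the paper instead cites Rohlfs--Speh~\cite{RS} for the (qualitative) bound $\dim_E\bigl(H^n/H^n_{\cusp}\bigr)=o(p^{dk})$, which suffices because the bootstrapping then upgrades $o(p^{dk})$ to $O(p^{(d-1)k})$ for the full cohomology at once. Second, the ``delicate point'' you flag about middle degrees for Levi subgroups is genuine, but it is subsumed in the larger issue: an induction on $\dim\Goo$ of the kind you describe would need the theorem you are trying to prove (or its multiplicity counterpart) as the inductive input for each Levi, so again you cannot avoid an independent base for the argument of the Iwasawa-theoretic type.
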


These two theorems are evidently closely related,
in light of the results of \cite{Fr}, which show that
$H^n\bigl(\Gamma(\p^k)\backslash \Goo,\V_{W,k}\bigr)$
may be computed in terms of automorphic forms.

In the remainder of this introduction,
we discuss the relation of
Theorem~\ref{theorem:main} to prior results in this direction
before briefly describing the main ingredients in the proof
of the two theorems.

DeGeorge and Wallach~\cite{DW} established
general upper bounds for 
$m(\pi,\Gamma)$ in the case where $\Gamma$ is cocompact.
In particular (ibid, Corollary 3.2), they showed that
\begin{equation}
\label{eqn:bound}
m(\pi,\Gamma) \le
\left( \int_B |\phi(g)|^2 dg \right)^{-1}\mathrm{vol}(\Gamma\backslash \Goo),
\end{equation}
where $\phi(g) = \langle \pi(g) v, v\rangle$ is a matrix coefficient,
and  $B$ is the preimage in $\Gamma \backslash \Goo$
of a ball in $\Gamma \backslash \Goo/\Koo$ of radius equal to 
the injectivity radius of $\Gamma \backslash \Goo/\Koo$.
Suppose, however, that $\pi$ is \emph{not}
a discrete series.
In particular, 
the corresponding matrix coefficients of $\pi$ are then not 
square integrable. If  $\Gamma(\q)$
denotes the mod $\q$ congruence subgroup of $\Gamma$,
then $\mathrm{inj.rad}(\Gamma(\q)\backslash \Goo) \rightarrow
\infty$ as $N_{F/\Q}(\q) \rightarrow \infty$,
and thus, the formula of DeGeorge--Wallach implies that
\begin{equation}
\label{eqn:decay}
\lim_{N_{F/\Q}(\q) \rightarrow \infty} 
V(\q)^{-1}
\cdot m(\pi,\Gamma(\q)) =  0.
\end{equation}
For non-cocompact $\Gamma$, an analogous result
was established by Savin \cite{Savin}.

It is natural to try to improve this result
so as to obtain an estimate on the rate of decay in~(\ref{eqn:decay})
as $N_{F/\Q}(\q) \rightarrow \infty$.
If $\pi$ is non-tempered, then~(\ref{eqn:bound}) itself implies
an estimate of the form
\begin{equation}
\label{eqn:power}
m(\pi,\Gamma(\q)) \ll V(\q)^{1 - \mu}
\end{equation}
for some $\mu > 0$.   (See \cite{SX}, Lemma~1 and displayed equation~(6).)
In fact Sarnak and Xue in~\cite{SX} have conjectured
an inequality of the following form (in the case of cocompact $\Gamma$):

\begin{conj}[Sarnak--Xue]
\label{conj:sx}
For $\pi \in \Ghat$ fixed,
$$m(\pi,\Gamma(\q)) \ll  V(\q)^{(2/p(\pi)) + \eps}, \qquad \text{for all $\eps > 0$},$$
where $p(\pi)$ is the infimum over $p \ge 2$ such that the $K$-finite 
matrix coefficients of $\pi$ are in $L^p(\Goo)$.
\end{conj}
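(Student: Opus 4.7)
The plan is to follow the strategy introduced by Sarnak and Xue themselves, sharpening the DeGeorge--Wallach inequality~(\ref{eqn:bound}) by choosing the test vector optimally. Fix a $K$-finite unit vector $v \in \pi$, and let $\phi(g) = \langle \pi(g)v,v\rangle$ denote the corresponding matrix coefficient. Let $R$ be (up to a constant) the injectivity radius of $\Gamma(\q)\backslash \Goo$, and let $B_R$ denote the ball of radius $R$ in $\Goo$. The starting observation is that if $f = \phi \cdot \mathbf{1}_{B_R}$, then a positivity/pre-trace argument using the self-adjoint convolution operator $R_\Gamma(f * f^*)$ on $L^2(\Gamma(\q)\backslash\Goo)$ collapses, at the injectivity radius, to a single lattice term and gives
\[
m(\pi,\Gamma(\q)) \;\le\; \Big(\int_{B_R} |\phi(g)|^2\, dg\Big)^{-1} \mathrm{vol}(\Gamma(\q)\backslash \Goo),
\]
which is exactly~(\ref{eqn:bound}).

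The next step is to quantify the growth of $\int_{B_R} |\phi(g)|^2\, dg$. By the definition of $p(\pi)$, the $K$-finite matrix coefficients lie in $L^{p(\pi)+\eps}(\Goo)$. Invoking the classical Harish-Chandra / Cowling--Haagerup--Howe estimates, $|\phi(g)|$ is essentially comparable to $\Xi(g)^{2/p(\pi)}$ (times polynomial factors on the Cartan). Combining the Iwasawa decomposition, the Weyl integration formula $e^{2\rho(H)}\, dH$, and volume growth in $\Goo$, one computes that
\[
\int_{B_R} |\phi(g)|^2 \, dg \;\gg\; \mathrm{vol}(B_R)^{1 - 2/p(\pi) - \eps}
\]
for $R$ large. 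Since $\mathrm{vol}(B_R) \asymp V(\q)$ at the injectivity radius (this is where the cocompact hypothesis and the congruence-subgroup structure enter), plugging back yields the conjectured bound $m(\pi,\Gamma(\q)) \ll V(\q)^{2/p(\pi)+\eps}$.

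The main obstacle, and the reason this remains a conjecture, is the lower bound on $\int_{B_R}|\phi|^2$. Upper bounds of the form $|\phi(g)| \le C\,\Xi(g)^{2/p(\pi)-\eps}$ are well-understood, but a matching lower bound depends on a careful choice of the $K$-type of $v$ so that $\phi$ does not vanish on walls of the Weyl chamber, and on uniformity of the constants across representations $\pi$ with the same $p(\pi)$. Equivalently, one needs to rule out exceptional eigenvalues of $\pi$ whose matrix coefficients decay strictly faster than $\Xi^{2/p(\pi)}$ in certain directions while still lying in $L^{p(\pi)+\eps}$. In full generality this is a nontrivial piece of harmonic analysis tied to conjectural spectral gaps, so I would expect to prove only a weaker version of~\ref{conj:sx} --- replacing $2/p(\pi)$ by some worse exponent $1-\mu$ --- which is already sufficient for the applications of the form of Theorems~\ref{theorem:main} and~\ref{theorem:second main}, where one only needs \emph{some} power saving.
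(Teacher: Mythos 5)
This statement is a \emph{conjecture} (labeled Conjecture~\ref{conj:sx}, due to Sarnak and Xue), not a theorem: the paper does not prove it, but cites it as context, noting that Sarnak and Xue established it for arithmetic lattices in $\SL_2(\R)$ and $\SL_2(\C)$ and obtained partial results for $\mathrm{SU}(2,1)$. There is therefore no ``paper's own proof'' for you to be compared against, and any complete proof you produced would be a new result, not a reproduction.

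That said, your write-up is a faithful and self-aware account of the Sarnak--Xue heuristic: the DeGeorge--Wallach bound~(\ref{eqn:bound}) with a $K$-finite test vector, the Cowling--Haagerup--Howe comparison $|\phi| \approx \Xi^{2/p(\pi)}$, the Cartan/Weyl volume computation giving $\int_{B_R}|\phi|^2 \gg \mathrm{vol}(B_R)^{1-2/p(\pi)-\eps}$, and $\mathrm{vol}(B_R) \asymp V(\q)$ at the injectivity radius of a congruence cover. You also correctly identify the genuine obstruction --- the needed \emph{lower} bound on $\int_{B_R}|\phi|^2$, which requires controlling directional decay and $K$-type dependence uniformly and is not implied by the $L^{p(\pi)+\eps}$ membership alone --- and you correctly observe that a weaker exponent $1-\mu$ suffices for applications like Theorems~\ref{theorem:main} and~\ref{theorem:second main}. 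In short: you did not, and could not have, proven the statement, but you recognized this, and your sketch is the right heuristic with the right gap honestly flagged.

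One caution: even the weaker $V(\q)^{1-\mu}$ bound you fall back to is, in the regime where the present paper's theorems operate, \emph{not} in general available from the Sarnak--Xue argument alone. That argument is vacuous when $\pi$ is tempered but not discrete series (then $p(\pi)=2$ and the exponent is $1$), which is exactly the regime in which Theorems~\ref{theorem:main} and~\ref{theorem:second main} give new information via the Iwasawa-theoretic bootstrap rather than via~(\ref{eqn:bound}).
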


Sarnak and Xue proved their conjecture for arithmetic lattices in
$\SL_2(\R)$ and $\SL_2(\C)$, obtaining partial results
in the direction of this conjecture for $\mathrm{SU}(2,1)$.
Note, however, that their conjecture
is non trivial only for \emph{non-tempered} representations,
since for tempered representations, $p(\pi) = 2$. 
In particular,
in the tempered but non-discrete series case,
Conjecture~\ref{conj:sx} is weaker than the known result~(\ref{eqn:decay}).

In Theorem~\ref{theorem:main}, we restrict our
attention to congruence covers of the form $\Gamma(\p^k)$
for the fixed prime $\p$.
For such covers we obtain a quantitative
improvement of~(\ref{eqn:decay}) even
in the case of tempered representations
(at least for those of cohomological type;
note that non-discrete series tempered representations
of cohomological type exist precisely when $\Goo$ admits no discrete series
-- see \cite{BW}, Thm.~5.1, p.~101).
For such representations,
our result provides the first general bound of the form~(\ref{eqn:power})
for any $\mu > 0$.

\medskip

As we already noted, our two main theorems are closely related.
Indeed, Theorem~\ref{theorem:main} is an easy corollary
of Theorem~\ref{theorem:second main} (see the end of
Section~\ref{section:cohomology} below),
and most of our efforts will be concentrated on establishing
the latter result.

When studying the Betti numbers of arithmetic quotients of
symmetric spaces, it is natural to try to use tools such as
Euler characteristics and the Lefschetz trace formula.
When applied to analyzing
contributions from the discrete series,
such methods tend to be very powerful; for example,
the $(\mathfrak g,\mathfrak k)$-cohomology of a discrete series
representation is concentrated in a single dimension \cite{BW},
and so no cancellations occur when taking alternating sums.
However, in other situations, these methods can be 
useless. 
For example, if $\pi$ is tempered but not discrete series,
then the Euler characteristic of its $(\mathfrak g,\mathfrak k)$-cohomology
vanishes \cite{BW}.  Similarly,
in situations where 
the symmetric space is a real manifold of odd dimension $n$, Poincare
duality leads to cancellations in  the natural
sum $\displaystyle{\sum_{k=0}^n (-1)^k \dim(H^k)}$.
One is thus forced to find different techniques.
The proof of Theorem~\ref{theorem:second main}
takes as input the inequality~(\ref{eqn:decay}) of~\cite{DW} and~\cite{Savin}
and a spectral sequence from~\cite{Emerton}, proceeding via a bootstrapping
argument relying on non-commutative Iwasawa theory.

\medskip

{\em Acknowledgments.} The second author would like to thank
Peter Sarnak for a very stimulating conversation on the subject
of this note.

\section{Iwasawa Theory}

Let $G \subseteq \GL_N(\Z_p)$ be an
analytic pro-$p$ group.
Let $G_k = G \cap (1 + p^k M_N(\Z_p))
 \subseteq \GL_N(\Z_p)$.
The subgroups $G_k$ form a fundamental set of open neighbourhoods
of the identity in $G$, and moreover, for
large $k$, there exists a constant $c$ such
that $[G:G_k] = c \cdot p^{d k}$, where
$d = \dim(G)$.

Fix a finite extension $E$ of $\Q_p$ with ring of integers $\OL_E$.
Write $\La = \OL_E[[G]]$
and $\La_E = E\otimes_{\OL_E} \La$.
The module theory of
$\La$ falls under the rubric of Iwasawa theory.
A fundamental result of Lazard~\cite{L} states
that $\La$ is Noetherian; the same is thus true
of the ring $\La_E$.  The rings $\La$ and $\La_E$ are  non-commutative
domains admitting a common field of
fractions which we will denote by $\calL$. Thus, $\calL$ is a division
ring which contains $\La$ and $\La_E$ and is flat over each of them
(on both sides).
If $M$ is a finitely generated left $\La$-module (resp.\ $\La_E$-module),
then $\calL \otimes_{\La} M$ (resp.\ $\calL \otimes_{\La_E} M$)
is a finite-dimensional left
$\calL$-vector space; we define the {\em rank} of $M$
to be the $\calL$-dimension of this vector space.
Note that 
rank is additive in short exact sequences of finitely generated
$\La$-modules (resp.\ $\La_E$-modules),
by virtue of the flatness of $\calL$ over $\La$ and $\La_E$.

Recall that a continuous representation of $G$ on an
$E$-Banach space $V$ 
is called {\em admissible} if
its topological $E$-dual $V'$ (which is naturally a $\La_E$-module)
is finitely generated over $\La_E$.  (See \cite{ST}; a key point is
that since $\La_E$ is Noetherian, the category of admissible continuous
$G$-representations is abelian. Indeed, passing to topological duals
yields an anti-equivalence with the abelian category of finitely generated
$\La_E$-modules.)
We define the {\em corank} of an admissible $G$-representation
to be the rank of the finitely generated $\La_E$-module $V'$.

A coadmissible $G$-representation $V$ is not determined by the collection
of subspaces of invariants $V^{G_k}$ ($r \geq 1$).
However, the following result 
(Theorem 1.10 of Harris~\cite{Harris}) shows that its corank is so determined.

\begin{theorem}[Harris]
Let $V$ be an $E$-Banach space equipped with
an admissible continuous $G$-representation
and let $d = \dim(G)$.
Then as $k \rightarrow \infty$,
$$\dim_E V^{G_k} = 
r \cdot [G:G_k] +  O(p^{(d-1)k}) = 
 r \cdot c \cdot p^{d k} +  O(p^{(d-1)k}),$$
where $r$ is the corank of $V$ and
$c$ depends only on $G$.
\label{theorem:harris}
\end{theorem}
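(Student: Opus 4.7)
The plan is to dualize, converting the invariants $V^{G_k}$ into coinvariants of a finitely generated $\Lambda_E$-module, and then to extract the leading asymptotic via the Hilbert--Samuel-type filtered structure that Lazard places on $\OL_E[[G]]$.

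First, via the Schneider--Teitelbaum anti-equivalence between admissible continuous Banach $G$-representations and finitely generated $\Lambda_E$-modules, set $M := V'$, a finitely generated $\Lambda_E$-module of rank $r$, and let $\omega_k \subset \Lambda_E$ denote the closed two-sided ideal generated by $\{g-1 : g \in G_k\}$ (which makes sense because $G_k$, being cut out by a congruence condition in $\GL_N(\Z_p)$, is normal in $G$). Then $\Lambda_E/\omega_k \cong E[G/G_k]$ has $E$-dimension $[G:G_k]$. The natural continuous pairing $V \times V' \to E$ restricts to a perfect pairing between the finite-dimensional space $V^{G_k}$ and $M/\omega_k M$, and so $\dim_E V^{G_k} = \dim_E M/\omega_k M$. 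Hence it suffices to prove
$$\dim_E M/\omega_k M = r \cdot [G:G_k] + O\bigl(p^{(d-1)k}\bigr).$$

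Second, after replacing $G$ by an open uniform pro-$p$ subgroup (which alters both sides only by a bounded factor and adjusts $c$), I may assume $G$ is uniform. By Lazard, $\Lambda = \OL_E[[G]]$ then carries an augmentation-ideal filtration whose associated graded ring is a commutative polynomial ring in finitely many variables over a graded base. This supplies $\Lambda_E$ with a Hilbert--Samuel dimension theory: to each finitely generated $\Lambda_E$-module $N$ one attaches a dimension $\delta(N) \leq d$ and a positive multiplicity $e(N)$ such that $\dim_E N/\omega_k N \sim e(N) \cdot p^{\delta(N)k}$, with $\delta$ and $e$ additive in top-dimensional short exact sequences. One has $\delta(\Lambda_E) = d$ with $e(\Lambda_E) = c$; moreover $\delta(N) \leq d-1$ whenever $N$ is torsion, while $\delta(N) = d$ with $e(N) = \mathrm{rank}(N) \cdot c$ whenever $N$ has positive rank (proved by embedding a maximal free submodule).

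Finally, to conclude, pick generators of $M/M_{\mathrm{tor}}$ whose images in $\calL \otimes_{\Lambda_E} M \cong \calL^r$ form a basis, yielding an embedding $\Lambda_E^r \hookrightarrow M/M_{\mathrm{tor}}$ with torsion cokernel $T$. Combined with the torsion-filtration sequence, this produces
$$0 \to \Lambda_E^r \to M/M_{\mathrm{tor}} \to T \to 0 \quad\text{and}\quad 0 \to M_{\mathrm{tor}} \to M \to M/M_{\mathrm{tor}} \to 0,$$
with $M_{\mathrm{tor}}$ and $T$ torsion. Applying the right-exact functor $(-)/\omega_k(-)$, the main term $r \cdot [G:G_k]$ is contributed by $(\Lambda_E^r)_{G_k} = E[G/G_k]^r$, while the corrections coming from $(M_{\mathrm{tor}})_{G_k}$, $T_{G_k}$, and the first group-homology $H_1(G_k, T)$ (the latter controlled by applying the same Hilbert--Samuel bound to a finite free presentation of $T$) are each $O\bigl(p^{(d-1)k}\bigr)$. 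The main obstacle is the dimension-theoretic input of Lazard and Venjakob on $\Lambda_E$; once that is granted, the combination of short exact sequences above is essentially bookkeeping.
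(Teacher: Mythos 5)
The paper does not supply a proof of this statement; it is quoted directly from Harris (Theorem~1.10 of~\cite{Harris}), so there is no in-paper argument to compare against. Evaluated on its own merits, your strategy is the standard one and, to my knowledge, close in spirit to Harris's original argument. The duality step is correct: since $V \cong M'$ under the Schneider--Teitelbaum anti-equivalence, a functional $\phi \in V$ is $G_k$-fixed iff it kills $\omega_k M$, so $V^{G_k} \cong (M/\omega_k M)^*$ and the dimensions match ($G_k$ is indeed normal, being the preimage of a congruence subgroup of $\GL_N(\Z_p)$). The reduction to uniform $G$ is harmless provided you note that both the corank and $[G:G_k]$ pick up a factor of $[G:G']$ that cancels in the product $r \cdot [G:G_k]$. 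The decomposition via $0 \to M_{\mathrm{tor}} \to M \to M/M_{\mathrm{tor}} \to 0$ and $0 \to \Lambda_E^r \to M/M_{\mathrm{tor}} \to T \to 0$ with torsion error terms is exactly the right bookkeeping.

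Two places want sharpening. First, the two-sided asymptotic ``$\dim_E N/\omega_k N \sim e(N)\,p^{\delta(N)k}$'' with a well-defined multiplicity is more than you need, and it is not clear to me that non-commutative dimension theory (Venjakob et al.) delivers it in exactly this form, since the canonical dimension there is defined via $\mathrm{Ext}$-grade rather than directly via the $\omega_k$-filtration. All your argument actually requires is the one-sided estimate $\dim_E N/\omega_k N = O(p^{(d-1)k})$ for $N$ torsion, which does follow from Lazard's graded structure: filter $N$ by cyclic quotients $\Lambda_E/I$, dominate each by $\Lambda_E/\Lambda_E a$ with $a\neq 0$, and show that right-multiplication by $a$ on $E[G/G_k] = \Lambda_E/\omega_k$ has cokernel of dimension $O(p^{(d-1)k})$. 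Second, the control of $H_1(G_k,T) = \mathrm{Tor}_1^{\Lambda_E}(E[G/G_k],T)$ is stated rather elliptically; it is worth spelling out that for cyclic torsion $T = \Lambda_E/\Lambda_E a$ the free resolution $0 \to \Lambda_E \xrightarrow{\,\cdot a\,} \Lambda_E \to T \to 0$ gives $\mathrm{Tor}_1 \cong \ker(\bar a\colon E[G/G_k]\to E[G/G_k])$, which by finite-dimensionality has the same dimension as $\mathrm{coker}(\bar a) = T/\omega_k T$, and the bound for general torsion $T$ then follows by filtering and chasing long exact sequences. With these points made explicit, the proof is sound.
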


Using this result, we may obtain bounds on the dimensions of 
the continuous cohomology groups
$H^i(G_k,V)$ in terms of $k$ for admissible continuous $G$-representations
$V$.  (Let us remark that the continuous $G_k$-cohomology on the category
of admissible continuous $G$-representations may also be computed
as the right derived functors of the functor of $G_k$-invariants; see
Prop.~1.1.3 of \cite{Emerton}.)
\begin{lemma} Let $V$ be an admissible continuous $G$-representation. 
For each $i \ge 1,$ 
$$\dim_E H^i(G_k,V) \ll p^{(d-1)k},$$
as $k \rightarrow \infty$.
\label{lemma:small}
\end{lemma}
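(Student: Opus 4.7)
The strategy is a dimension-shifting argument driven by Theorem~\ref{theorem:harris}. Since $V'$ is finitely generated over $\La_E$, one can choose a surjection $\La_E^n \twoheadrightarrow V'$, which via the anti-equivalence of \cite{ST} corresponds to an embedding $V \hookrightarrow I$ of admissible continuous $G$-representations with $I' \cong \La_E^n$. Setting $W = I/V$, Noetherianity of $\La_E$ (Lazard) shows that $W'$ is finitely generated as a submodule of $\La_E^n$, so $W$ is again admissible and one has a short exact sequence $0\to V\to I\to W\to 0$ of admissible representations.

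The representation $I$ is acyclic for $G_k$-cohomology in positive degrees. Indeed, $\OL_E[[G]]$ is free of rank $[G:G_k]$ over $\OL_E[[G_k]]$ (with basis any system of coset representatives), from which it follows that $H^i(G_k, I) = 0$ for all $i \ge 1$ while $\dim_E I^{G_k} = n\cdot [G:G_k]$. The long exact sequence in continuous $G_k$-cohomology (identified with the derived functors of $(-)^{G_k}$ by Prop.~1.1.3 of~\cite{Emerton}) therefore yields, for $i=1$, the four-term exact sequence
$$0 \to V^{G_k} \to I^{G_k} \to W^{G_k} \to H^1(G_k, V) \to 0,$$
and for $i \ge 2$ isomorphisms $H^i(G_k, V) \cong H^{i-1}(G_k, W)$.

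Applying Theorem~\ref{theorem:harris} to both $V$ and $W$, and using additivity of corank in the short exact sequence $0\to W'\to \La_E^n\to V'\to 0$ of $\La_E$-modules (so that $\mathrm{corank}(V)+\mathrm{corank}(W)=n$), the leading $[G:G_k]$-terms in the identity
$\dim_E H^1(G_k, V) = \dim_E V^{G_k} + \dim_E W^{G_k} - n\cdot[G:G_k]$
cancel exactly, leaving only the error terms from Harris and giving $\dim_E H^1(G_k, V) = O(p^{(d-1)k})$. For $i \ge 2$, the result follows by induction on $i$, applied to the admissible representation $W$; the induction terminates because $G_k$ has finite cohomological dimension. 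The only real obstacle is the initial setup---producing the acyclic ``injective hull'' $I$ and justifying the identification of continuous cohomology with derived functors of $G_k$-invariants; once those are in hand, the bound is an Euler-characteristic-style cancellation reflecting the additivity of rank.
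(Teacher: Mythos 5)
Your proof is correct and follows essentially the same route as the paper's. The auxiliary object $I$ you build (characterized by $I' \cong \La_E^n$) is exactly $\Co(G,E)^n$, where $\Co(G,E)$ is the space of continuous $E$-valued functions on $G$ with the right regular action; the paper constructs this space directly, calls it $\Co$, and notes that it is cofree (topological dual free of rank one over $\La_E$), injective in the category of admissible representations, and hence acyclic. You arrive at the acyclicity by the dual observation that $\La_E$ is free over $\La_E(G_k)$ of rank $[G:G_k]$, which is a reasonable (if slightly more explicit) justification of the same fact. The remaining steps---the four-term exact sequence, the Euler-characteristic cancellation via Harris's theorem and additivity of corank, and the dimension-shifting induction using $H^i(G_k,V)\cong H^{i-1}(G_k,W)$ for $i\ge 2$---match the paper word for word.
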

\begin{proof}
Let $\Co:=\Co(G,E)$ denote the Banach space of continuous $E$-valued
functions on $G$, equipped with the right regular $G$-action.
The module $\Co$ has corank one (indeed, it is cofree -- its
topological dual is free of rank one over $\La_E$).
Moreover, $\Co$ is injective in the abelian
category of admissible $G$-representations
and is therefore acyclic. 
If $V$ is an admissible continuous $G$-representation,
then there exists an exact sequence
$$
0 \longrightarrow V \longrightarrow \Co^n \longrightarrow W \longrightarrow 0$$
of admissible continuous $G$-representations
for some integer $n \geq 0$.  
Since $\Co$ is acyclic, from the long
exact sequence of cohomology we obtain the following:
\begin{equation}
\label{eqn:ses}
0 \longrightarrow V^{G_k} \longrightarrow  (\Co^{G_k})^{n} \longrightarrow
{W}^{G_k} \longrightarrow
 H^1(G_k,V) \longrightarrow 0,
\end{equation}
\begin{equation}
\label{eqn:dimension shift}
H^{i}(G_k,V) \simeq H^{i-1}(G_k,W), \ i \ge 2.
\end{equation}
The lemma for $i = 1$ follows
from a consideration of~(\ref{eqn:ses}),
taking into account Theorem~\ref{theorem:harris}
and the fact that
corank of $W$ is
equal to $n$ minus the corank of $V$
(since corank is additive in short exact sequences). 
We now proceed by induction on $i$. Assume the result for $i \le m$
and all admissible continuous representations, in particular for $W$.
The result for $i = m+1$ then follows directly from the
isomorphism~(\ref{eqn:dimension shift}).
This completes the proof.
\end{proof}

\section{Cohomology of Arithmetic Quotients of Symmetric
Spaces}
\label{section:cohomology}

We now return to the situation
considered in the introduction
and use the notation introduced there.
In particular, we fix a connected
semisimple linear group $\G$ over $F$,
an embedding $\G \hookrightarrow \GL_N$ over $F$,
an arithmetic lattice $\Gamma$ of the associated
real group $\Goo$, and a prime $\p$ of $F$.

If we write $\displaystyle G := \lim_{\overleftarrow{k}}
 \Gamma/\Gamma(\p^k),$
then $G$ is a compact open subgroup of the $p$-adic Lie group
$\G(F_{\p})$ (where $F_{\p}$ denotes the completion of $F$ at $\p$);
alternatively, we may define $G$ to be the closure of $\Gamma$ in 
$\G(F_{\p})$.
If we replace $\Gamma$ by $\Gamma(\p^{k})$ for some sufficiently
large value of $k$ (i.e. discarding 
finitely many
initial terms in the descending sequence of lattices $\Gamma(\p^k)$),
then $G$ will be pro-$p$ and hence, will be an analytic pro-$p$-group.
Note that $\Gamma$ is a dense subgroup of $G$. 
Let $e$ and $f$ denote respectively
the ramification and inertial indices of $\p$ in $F$
(so that $[F_{\p}:\Q_p] = e\kern-0.1em{f}$).
For each $k\geq 0$,
write $G_k$ to denote the closure of $\Gamma(\p^{e k})$ in $G$.
Alternatively,
if we consider the embedding
$$\G(F_{\p}) \hookrightarrow \GL_N(F_{\p})\hookrightarrow \GL_{e\kern-0.1em{f} \kern-0.1em{N}}(\Q_p),$$
then $G_k = G \cap (1 + p^k M_{e\kern-0.1em{f} \kern-0.1em{N}}(\Z_p))$;
thus, our notation is compatible with that of the preceding section.
We let $d$ denote the dimension of $G$; note that
$d = \frac{e\kern-0.1em{f}}{[F:\Q]} \cdot \dim(\Goo).$

For each $k \geq 0,$ we write
$$Y_k := \Gamma(\p^{e k})\backslash \Goo/\Koo.$$
There is a natural action of $G$ on $Y_k$
through its quotient
$G/G_k$ ($\cong \Gamma/\Gamma(\p^{e k})$),
which is compatible with the projections
$Y_k \rightarrow Y_{k'}$
for $0 \leq k' \leq k$.

\medskip

Fix a finite-dimensional representation $W$ of $\G$ over $E$,
and let $W_0$ denote a $G$-invariant $\OL_E$-lattice in $W$.
Let $\V_k$ denote the local system of
free finite rank $\OL_E$-modules on
$Y_k$ associated to $W_0$, and denote by $\V_k$ the pull-back
of $\V_0$ to $Y_k$ for any $k \ge 0$.
If $0 \leq k' \leq k,$ then the sheaf
$\V_k$ on $Y_k$ is
naturally isomorphic to the pull-back of the sheaf
$\V_{k'}$ on $Y_{k'}$ 
under the projection
$Y_k \rightarrow Y_{k'}.$
In particular the 
sheaf $\V_k$ is $G/G_k$-equivariant.

Recall the following definitions from from
\cite{Emerton}, p.~21:
$$
\Htw^n(\V):=  
\lim_{\overleftarrow{s}}  \lim_{\overrightarrow{k}}
 H^j(Y_k,\V_k/p^s),
\qquad
\Htw^n(\V)_E := 
E \otimes_{\OL_E} \Htw^n(\V).
$$
Each
$\Htw^n(\V)$ is a 
$p$-adically complete $\OL_E$-module, equipped with a left $G$-action
in a natural way,
and hence, each
$\Htw^n(\V)_E$ has a natural structure
of $E$-Banach space and is equipped with a continuous 
left $G$-action.  In fact, they are admissible continuous
representations of $G$ (\cite{Emerton}, Thm.~2.1.5~(i)), and
in particular,
Theorem~\ref{theorem:harris} and Lemma~\ref{lemma:small}
apply to them. 
(Note that the results of \cite{Emerton} are stated in the
ad\`elic language.  We leave it to the reader to make
the easy translation to the more classical language
we are using in this paper.)

The following result, which is
Theorem~2.1.5~(ii) of~\cite{Emerton}, p.~22,
is a ``control theorem'' relating $G_k$ invariants in
$\Htw^i(\V)^{G_k}_E$ to the classical cohomology classes
$H^j(Y_k,\V_k) \otimes E$. 

\begin{theorem}
\label{thm:ss}
Fix an integer $k$.
There is a spectral sequence
$$E^{i,j}_2(Y_k) = H^i(G_k,\Htw^j(\V)_E)
\Longrightarrow H^{i+j}(Y_k,\V_k)_E.$$
\end{theorem}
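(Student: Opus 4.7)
The plan is to construct the spectral sequence as a Hochschild--Serre (Cartan--Leray) spectral sequence for the tower of Galois covers $Y_{k'} \to Y_k$ (for $k' \geq k$, with finite $p$-group $G_k/G_{k'}$), and then to pass to the appropriate $p$-adic limit in two stages: a direct limit in the cover level $k'$, followed by an inverse limit in the torsion level $s$ and tensoring with $E$.

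First, I would fix $s \geq 1$ and observe that $\V_{k'}/p^s$ on $Y_{k'}$ is the pullback of $\V_k/p^s$ under the finite \'etale Galois cover $Y_{k'} \to Y_k$, so that classical covering-space theory furnishes a Hochschild--Serre spectral sequence
$$H^i\bigl(G_k/G_{k'},\, H^j(Y_{k'}, \V_{k'}/p^s)\bigr) \Longrightarrow H^{i+j}(Y_k, \V_k/p^s).$$
Taking the direct limit over $k'$ leaves the abutment unchanged, while the $E_2$-term becomes continuous $G_k$-cohomology $H^i\bigl(G_k,\, \lim_{\overrightarrow{k'}} H^j(Y_{k'}, \V_{k'}/p^s)\bigr)$, where the coefficient module is a discrete (in fact $p^s$-torsion) $G_k$-module; that continuous cohomology commutes with filtered colimits of discrete coefficients here is standard.

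Next, I would take $\lim_{\overleftarrow{s}}$ and then tensor with $E$. The abutment becomes $H^{i+j}(Y_k, \V_k)_E$; for the $E_2$-page, by the very definition of $\Htw^j(\V)$, I must identify
$$\Bigl(\lim_{\overleftarrow{s}} H^i\bigl(G_k,\, \lim_{\overrightarrow{k'}} H^j(Y_{k'}, \V_{k'}/p^s)\bigr)\Bigr) \otimes_{\OL_E} E \;\cong\; H^i\bigl(G_k,\, \Htw^j(\V)_E\bigr).$$
The hard part will be exactly this interchange of $\lim_{\overleftarrow{s}}$ with continuous $G_k$-cohomology, since the relevant $\lim^1$ terms need not vanish integrally. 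The mechanism I would use to justify it is the admissibility of $\Htw^j(\V)_E$ as a continuous Banach $G$-representation (recorded as Theorem~2.1.5(i) of~\cite{Emerton}): admissibility lets one compute $H^i(G_k,-)$ on admissible representations as a derived functor on an abelian category (cf.\ Prop.~1.1.3 of~\cite{Emerton}), and the Noetherianity of $\La_E$ (Lazard), together with Schneider--Teitelbaum duality between admissible representations and finitely generated $\La_E$-modules, should reduce the commutation to a finite-type statement for which any residual $\lim^1$ terms are killed by inverting $p$. Assembling these identifications will then yield the desired spectral sequence.
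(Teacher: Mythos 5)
The paper does not actually prove this statement: Theorem~\ref{thm:ss} is quoted verbatim from \cite{Emerton}, Theorem~2.1.5~(ii), with no argument given here. So the relevant comparison is with Emerton's proof, not with anything in the present text.

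Your skeleton --- Cartan--Leray/Hochschild--Serre for the finite $p$-group covers $Y_{k'}\to Y_k$ with $\OL_E/p^s$-coefficients, pass to $\lim_{\overrightarrow{k'}}$, then to $\lim_{\overleftarrow{s}}$ and tensor with $E$ --- is indeed the natural strategy, and the first two steps are fine: the Cartan--Leray spectral sequence for a free finite regular cover is standard, direct limits are exact and commute with continuous cohomology of profinite groups on discrete modules, and the abutment is independent of $k'$. The genuine gap is exactly where you flag it, and your proposed repair does not close it. An inverse limit of spectral sequences over $s$ is not automatically a spectral sequence: one needs $\lim^1$-vanishing (Mittag--Leffler or finiteness) control not just on the $E_2$-page but on \emph{every} page, and also on the abutment filtration, before one can legitimately speak of a limiting spectral sequence with the asserted $E_2$-term. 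Your appeal to admissibility of the Banach representation $\Htw^j(\V)_E$, Noetherianity of $\La_E$, and Schneider--Teitelbaum duality is pointing at plausible-looking tools but does not supply an argument: admissibility of $\Htw^j(\V)_E$ over $\La_E$ is a statement made \emph{after} inverting $p$, whereas the $\lim^1$-problems you are trying to suppress live at the $\OL_E/p^s$-level, before $p$ is inverted. The finiteness that actually drives Emerton's proof is integral: the $\OL_E$-module $\Htw^j(\V)$ is finitely generated over $\La=\OL_E[[G]]$ (this is the content of the first half of his Theorem~2.1.5, and is itself nontrivial, resting on the fact that each $Y_{k'}$ has the homotopy type of a finite CW complex), and this integral finiteness is what gives the $p$-adic control needed to pass the Hochschild--Serre machinery through $\lim_{\overleftarrow{s}}$. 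Moreover, Emerton does not literally take an inverse limit of spectral sequences; he works at the chain level with a complex of $\La$-modules computing completed cohomology and then takes derived $G_k$-invariants, which avoids the page-by-page Mittag--Leffler bookkeeping your sketch would require. As written, ``should reduce the commutation to a finite-type statement for which any residual $\lim^1$ terms are killed by inverting $p$'' is a hope, not a proof, and it is the entire technical content of the theorem.
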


One should view this spectral sequence as a version of
the Hochschild-Serre spectral sequence ``compatible in the
$G$-tower.''

\medskip

\begin{theorem}
\label{thm:rank}
For any $n \geq 0,$
if $r_n$ denotes the corank of $\Htw^n(\V)$,
then
$$\dim_E H^n(Y_k,\V_k)_E =
r_n\cdot c \cdot p^{d k} + O(p^{(d-1) k})$$
as $k \rightarrow \infty$. 
(Here $c$ denotes the constant appearing in the statement
of Theorem~\ref{theorem:harris}; it depends only on $G$.)
\end{theorem}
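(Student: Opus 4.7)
The plan is to run the spectral sequence of Theorem~\ref{thm:ss} and read off the dimension of its abutment by combining Harris's asymptotic (Theorem~\ref{theorem:harris}) for the edge column $E_2^{0,j}$ with the logarithmic savings from Lemma~\ref{lemma:small} on the rest of the $E_2$ page.

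First I would examine the $E_2^{i,j} = H^i(G_k, \Htw^j(\V)_E)$. Since each $\Htw^j(\V)_E$ is an admissible continuous $G$-representation, Theorem~\ref{theorem:harris} applied to $E_2^{0,j} = \Htw^j(\V)_E^{G_k}$ gives
$$\dim_E E_2^{0,j} = r_j \cdot c \cdot p^{dk} + O(p^{(d-1)k}),$$
whereas Lemma~\ref{lemma:small} yields $\dim_E E_2^{i,j} \ll p^{(d-1)k}$ for every $i \ge 1$ and every $j$. Only finitely many bidegrees $(i,n-i)$ with $0 \le i \le n$ contribute to the $n$-th diagonal, so summing gives the immediate upper bound
$$\dim_E H^n(Y_k, \V_k)_E \le \sum_{i=0}^n \dim_E E_2^{i,n-i} = r_n \cdot c \cdot p^{dk} + O(p^{(d-1)k}),$$
using that $E_\infty^{i,n-i}$ is a subquotient of $E_2^{i,n-i}$ and that the filtration on the abutment has graded pieces $E_\infty^{i,n-i}$.

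For the matching lower bound I would focus on the corner term $E_2^{0,n}$. It receives no incoming differentials (since $E_r^{-r,n+r-1}=0$), so $E_\infty^{0,n}$ is obtained from $E_2^{0,n}$ by iteratively passing to the kernel of the outgoing differentials $d_r \colon E_r^{0,n} \to E_r^{r,n-r+1}$ for $2 \le r \le n+1$. Each target $E_r^{r,n-r+1}$ is a subquotient of $E_2^{r,n-r+1}$, which has dimension $O(p^{(d-1)k})$ by Lemma~\ref{lemma:small} (the hypothesis $i \ge 1$ of that lemma is satisfied since $r \ge 2$). Hence the total codimension loss is absorbed into the error term, and
$$\dim_E E_\infty^{0,n} \ge \dim_E E_2^{0,n} - \sum_{r=2}^{n+1} \dim_E E_2^{r,n-r+1} = r_n \cdot c \cdot p^{dk} + O(p^{(d-1)k}).$$
Since $E_\infty^{0,n}$ is the top graded piece of the filtration, it is a quotient of $H^n(Y_k, \V_k)_E$, giving the reverse inequality and completing the proof.

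There is no real obstacle here: the argument is a bookkeeping exercise on the spectral sequence in which the main term is contributed entirely by $E_2^{0,n}$ and every other $E_2^{i,j}$ is of strictly smaller order. The only point requiring care is checking that none of the outgoing differentials from $E_r^{0,n}$ can damage the main term, which follows because their targets live at bidegrees with $i \ge 2 \ge 1$ where Lemma~\ref{lemma:small} applies.
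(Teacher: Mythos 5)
Your proof is correct and follows essentially the same route as the paper's: apply the spectral sequence of Theorem~\ref{thm:ss}, use Theorem~\ref{theorem:harris} on the corner term $E_2^{0,n}$ to extract the main term $r_n \cdot c \cdot p^{dk}$, and use Lemma~\ref{lemma:small} to bound every $E_2^{i,j}$ with $i \ge 1$ by $O(p^{(d-1)k})$, so that neither the differentials out of $E_r^{0,n}$ nor the remaining graded pieces of the filtration disturb the leading term. The only cosmetic difference is that you organize the bookkeeping as an explicit upper bound (summing $\dim E_2^{i,n-i}$) plus a lower bound (tracking the codimension lost in $E_\infty^{0,n}$), whereas the paper obtains both in one pass by noting the filtration of $H^n(Y_k,\V_k)_E$ has graded pieces $E_\infty^{i,j}$, of which only $E_\infty^{0,n}$ is of order $p^{dk}$; the content is identical.
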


\begin{proof}
For each $i,j \geq 0$ and $l \geq 2$, 
let
$E^{i,j}_{l}(Y_k)$
denote the terms in the
spectral sequence of Theorem~\ref{thm:ss}. 
Since $\Htw^j(\V)$ is admissible,
Lemma~\ref{lemma:small} implies that
$\dim_E H^i(G_k,\Htw^j(\V)_E) \ll p^{(d-1)k}$,
and thus, 
$\dim_E E^{i,j}_{l}(Y_k) \ll p^{(d-1)k},$
as $k \rightarrow \infty$
(since $E^{i,j}_{l}(Y_k)$
is a subquotient of $E^{i,j}_2(Y_k) := 
H^i(G_k,\Htw^j(\V)_E)$ 
for ${l} \ge 2$).
Theorem~\ref{theorem:harris} shows that
$$\dim_E E^{0,n}_2(Y_k) = 
\dim_E \Htw^n(\V)^{G_k}_E
= r_n \cdot c \cdot p^{d k} + O(p^{(d-1) k}).$$
On the other hand, since the spectral sequence of Theorem~\ref{thm:ss}
is an upper right quadrant exact sequence,
$E^{0,n}_{\infty}$
is obtained by taking finitely many successive kernels
of differentials $d_{l}$ to $E^{i+{l},j-{l}+1}_{l}$, which all
have order $\ll p^{(d-1)k}$ by the first part of our
argument. Thus,
$$\dim_E E^{0,n}_{\infty} = r_n \cdot c \cdot p^{d k} +  O(p^{(d-1) k}).$$
Since
$H^n(Y_k,\V_k)_E$ admits a finite length filtration whose
associated graded pieces are isomorphic to
$E^{i,j}_{\infty}$ for $i + j = n$,
we conclude that
$\dim_E H^n(Y_k,\V_k)_E 
= r_n \cdot c \cdot p^{d k} + O(p^{(d-1) k}),$
as claimed.
\end{proof}

The following lemma quantifies the precise relationship between
multiplicities and the dimensions of cohomology groups that we will
require to deduce Theorem~\ref{theorem:main} from
Theorem~\ref{theorem:second main}.

\begin{lemma}
\label{lem:passage}
Fix a cohomological degree $n$,
and let $S$ denote the set of isomorphism classes $[\pi]$ of 
$\pi \in \Ghat$ that contribute to cohomology with
coefficients in $\mathcal V$ in degree $n$.
Then 
$$\sum_{[\pi] \in S} m(\pi,G_k) 
\asymp
\dim_E H^n_{\cusp}(Y_k,\V_k)_E.$$
\end{lemma}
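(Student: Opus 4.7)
The plan is to invoke the classical Borel--Matsushima decomposition of cuspidal cohomology. After tensoring with $\C$ (which preserves dimensions, since $\dim_E V = \dim_{\C}(V\otimes_E \C)$), one has an isomorphism
$$H^n_{\cusp}(Y_k,\V_k)_E \otimes_E \C \cong \bigoplus_{[\pi] \in \Ghat} m\bigl(\pi,\Gamma(\p^{e k})\bigr) \cdot H^n(\mathfrak{g},\mathfrak{k};\pi_{\infty} \otimes W_{\C}),$$
where $W_{\C} := W \otimes_E \C$. The $(\mathfrak{g},\mathfrak{k})$-cohomology on the right vanishes unless $[\pi] \in S$, so only the cohomological classes contribute. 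This decomposition can be cited directly from Borel--Wallach \cite{BW}, since $\Gamma(\p^{e k})$ is torsion free for $k$ large.

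Next I would invoke the finiteness of $S$: by the Vogan--Zuckerman classification, only finitely many isomorphism classes of irreducible unitary representations of $\Goo$ have nonzero $(\mathfrak{g},\mathfrak{k})$-cohomology against a fixed finite-dimensional coefficient system. For each $[\pi] \in S$ the integer $a_\pi := \dim_{\C} H^n(\mathfrak{g},\mathfrak{k};\pi \otimes W_{\C})$ is a positive constant, depending on $\pi$ and $W$ but not on $k$.

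Setting $c_1 := \min_{[\pi] \in S} a_{\pi} > 0$ and $c_2 := \max_{[\pi] \in S} a_{\pi} < \infty$, the decomposition above immediately yields
$$c_1 \sum_{[\pi] \in S} m\bigl(\pi,\Gamma(\p^{e k})\bigr) \;\leq\; \dim_{E} H^n_{\cusp}(Y_k,\V_k)_E \;\leq\; c_2 \sum_{[\pi] \in S} m\bigl(\pi,\Gamma(\p^{e k})\bigr),$$
which is precisely the asserted $\asymp$.

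I do not anticipate any real obstacle, as both ingredients are classical. The only mild notational point is to identify the $m(\pi,G_k)$ appearing in the statement with the automorphic multiplicity $m(\pi,\Gamma(\p^{e k}))$ used by Borel--Matsushima; this is immediate from the definition $G_k = \overline{\Gamma(\p^{e k})}$ together with the equality $\Gamma \cap G_k = \Gamma(\p^{e k})$, which holds because $\Gamma$ is dense in $G$.
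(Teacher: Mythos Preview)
Your proposal is correct and follows essentially the same approach as the paper: invoke the Matsushima--Borel decomposition of cuspidal cohomology and use the finiteness of $S$ to bound the $(\mathfrak g,\mathfrak k)$-cohomology dimensions uniformly from above and below. The paper's proof is terser (it simply takes the lower bound $1$ and an upper bound $d$ rather than $\min a_\pi$ and $\max a_\pi$), but the content is identical.
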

\begin{proof}
Since the set $S$ is finite, there is an integer
$d \geq 1$ so that 
$$1 \leq \dim H^n(\mathfrak{g},\mathfrak{k}; \pi \otimes W) \leq d$$
for each isomorphism class $[\pi] \in S$.  This implies that
$$\sum_{[\pi] \in S} m(\pi,G_k) 
\leq
\dim_E H^n_{\cusp}(Y_k,\V_k)_E
\leq
d\bigl(\sum_{[\pi] \in S} m(\pi,G_k) \bigr)$$
for each $k \geq 0$.
\end{proof}

We can now prove our main result.

\begin{theorem}
\label{theorem:third main}
Let $n \geq 0,$
and suppose that either 
$\Goo$ does not admit discrete series
or else that $n \neq
\dfrac{1}{2}\dim(\Goo/\Koo).$
Then 
$$\dim_E H^n(Y_k,\V_k)_E \ll p^{(d-1)k}$$
as $k \rightarrow \infty$, for all $n \geq 0$.
\end{theorem}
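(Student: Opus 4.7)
The plan is to apply Theorem~\ref{thm:rank}, which asserts that
$$\dim_E H^n(Y_k,\V_k)_E = r_n \cdot c \cdot p^{dk} + O(p^{(d-1)k}),$$
where $r_n$ is the corank of $\Htw^n(\V)$. The desired estimate is equivalent to the vanishing $r_n = 0$; since $V(\p^{ek})$ grows like $p^{dk}$, this in turn amounts to the assertion $\dim_E H^n(Y_k,\V_k)_E = o(p^{dk})$, i.e., that the full cohomology grows strictly slower than the volume of the arithmetic quotient.

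To establish this, I would first invoke Franke's theorem \cite{Fr} to split $H^n(Y_k,\V_k)_E$ into its cuspidal and Eisenstein summands. The cuspidal summand is bounded, up to a constant, by the sum $\sum_{[\pi]\in S} m(\pi,G_k)$ appearing in Lemma~\ref{lem:passage}, where $S$ is the finite set of cohomological types $\pi\in\Ghat$ contributing in degree $n$. The hypothesis of the theorem prevents any $\pi\in S$ from lying in the discrete series: this is automatic when $\Goo$ has no discrete series, and otherwise follows from the fact that discrete series representations have $(\mathfrak{g},\mathfrak{k})$-cohomology concentrated in the middle degree $\frac{1}{2}\dim(\Goo/\Koo)$, which is excluded. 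The DeGeorge--Wallach bound \cite{DW} (or Savin's extension \cite{Savin} in the non-cocompact setting), namely equation~(\ref{eqn:decay}), then gives $m(\pi,\Gamma(\p^{ek})) = o(V(\p^{ek})) = o(p^{dk})$ for each $\pi\in S$; summing over the finite set $S$ yields $\dim_E H^n_{\cusp}(Y_k,\V_k)_E = o(p^{dk})$.

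The main obstacle is the Eisenstein contribution, which is not directly controlled by equation~(\ref{eqn:decay}). My plan here is to induct on the semisimple rank of $\G$: Franke's description realizes $H^n_{\mathrm{Eis}}$ in terms of relative Lie algebra cohomology of representations parabolically induced from cuspidal automorphic data on proper Levi subgroups $M \subsetneq \G$. Since $\dim(M_\infty) < \dim(\Goo)$, the arithmetic volumes on the $M$-side grow as $p^{d'k}$ with $d' < d$; combining the inductive hypothesis applied to $M$ with the spectral-sequence setup of Section~2 then yields an Eisenstein bound of order $O(p^{(d-1)k})$, which is absorbed into the error term. The delicate points are to verify that Franke's description is genuinely compatible with the $\p$-adic tower used in Theorem~\ref{thm:ss}, and to check that the exclusion of the middle degree for $\Goo$ descends to usable constraints on the contributing Levi data in each induced degree. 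Once these are in hand, the cuspidal and Eisenstein estimates together force $r_n = 0$, and Theorem~\ref{thm:rank} delivers the result.
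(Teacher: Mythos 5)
Your overall strategy is the same as the paper's: reduce via Theorem~\ref{thm:rank} to showing $\dim_E H^n(Y_k,\V_k)_E = o(p^{dk})$ (hence $r_n = 0$), split off the cuspidal part, use \cite{BW} to rule out discrete-series contributions under the stated hypotheses, and then invoke the DeGeorge--Wallach/Savin bound~(\ref{eqn:decay}) together with Lemma~\ref{lem:passage} to handle $H^n_{\cusp}$. That part of your argument tracks the paper exactly.

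Where you diverge, and where there is a genuine gap, is the treatment of the non-cuspidal (Eisenstein) piece. The paper simply cites the main result of Rohlfs--Speh~\cite{RS}, which asserts directly that $\dim_E\bigl(H^n(Y_k,\V_k)/H^n_{\cusp}(Y_k,\V_k)\bigr) = o(p^{dk})$; that already suffices to conclude $r_n = 0$. You instead propose an induction on semisimple rank using Franke's filtration of Eisenstein cohomology, aiming for the stronger bound $O(p^{(d-1)k})$. This is more ambitious than necessary: for the purposes of forcing $r_n = 0$, you only need $o(p^{dk})$, and the eventual $O(p^{(d-1)k})$ for the whole cohomology comes for free from a second application of Theorem~\ref{thm:rank} once the corank vanishes. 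Moreover, you flag but do not resolve the two serious difficulties in your sketch --- that Franke's filtration must be made compatible with the $\p$-power tower and the spectral sequence of Theorem~\ref{thm:ss}, and that the exclusion of the middle degree for $\Goo$ must descend to usable constraints on the Levi data. Neither of these is a formality, and as written the Eisenstein portion is a plan rather than a proof. The direct fix is to replace the induction with a citation of the Rohlfs--Speh theorem, which is exactly the input the paper uses at this point.
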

\begin{proof}
In the case when $\Goo$ admits discrete series,
recall that these contribute to cohomology only in the dimension
$\dfrac{1}{2}\dim(\Goo/\Koo)$
(\cite{BW}, Thm.~5.1, p.~101).
Thus, under the assumptions of the theorem,
there is no contribution from the discrete series to $H^n_{\cusp}(Y_k,\V_k)$.
The inequality~(\ref{eqn:decay}) of~\cite{DW}
and~\cite{Savin}, together with Lemma~\ref{lem:passage}
and the main result of~\cite{RS}
(which states that
$\dim_E \bigl( H^n(Y_k,\V_k)/H^n_{\cusp}(Y_k,\V_k)\bigr) = o(p^{d k})$),
thus shows that
$\dim_E H^n(Y_k,\V_k)_E = o(p^{d k})$
as $k \rightarrow \infty$, for all $n \geq 0$.
From Theorem~\ref{thm:rank}, we then infer that
each $\Htw^n(\V)$ has corank $0$.  Another application
of the same theorem now gives our result.
\end{proof}

Note that $V(\p^{e k}) \sim [G:G_k] \sim c \cdot p^{d k}$;
thus Theorem~\ref{theorem:third main}
implies Theorem~\ref{theorem:second main} since
\begin{equation}
\label{eqn:dimensions}
\dim(G) \leq \dim(\Goo). 
\end{equation}
Theorem~\ref{theorem:second main}
and Lemma~\ref{lem:passage}
together imply Theorem~\ref{theorem:main}.

\begin{remark}
{\em
We have equality in~(\ref{eqn:dimensions})
precisely when $\p$ is the unique prime lying over $p$ 
in $\OL_F$.   If there is more than one prime lying over $p$,
then $\dim(G)$ is strictly less than $\dim(\Goo)$,
and we obtain a corresponding improvement in the bounds of
Theorems~\ref{theorem:main} and~\ref{theorem:second main}, namely
(in the notation of their statements), that
$$m(\pi,\Gamma(\p^k)) \quad \text{ and } \quad
\dim_E H^i(Y(\p^k),\V_{W,k})_E \,\, \ll \, V(\p^n)^{1 - 1/\dim(G)}$$
(where, as we noted above,
$\dim(G) = \frac{e\kern-0.1em{f}}{[F:\Q]} \cdot \dim(\Goo),$
with $e$ and $f$ being the ramification and inertial index of $\p$
respectively).
}
\end{remark}

\begin{example}
{\em
Let $F/\Q$ be an imaginary quadratic field, and let
$\G = \SL_{2/F}$.
The corresponding symmetric
space $\Goo/\Koo = \SL_2(\C)/\mathrm{SU}(2)$
is a real hyperbolic three space $\H$,
and the quotients $Y$ are commensurable with
the Bianchi manifolds $\H/\PGL_2(\OL_K)$.
Choose a local system $\V_0$ associated to some
finite-dimensional representation $W$ of $\Goo = \GL_2(\C)$
and a congruence subgroup $\Gamma$.
Assume that $p = \p \overline{\p}$ splits in $\OL_F$,
and apply Theorem~\ref{theorem:third main} to the $\p$-power tower.
We obtain the inequality
$$H^1_{\cusp}(Y_k,\V_k) \ll p^{2k}$$
as $k \rightarrow \infty$.
It is natural to ask how tight this inequality is.

The main result of Calegari--Dunfield~\cite{CD} 
shows 
that there exists at least one $(F,\Gamma,\p)$ for which 
$$H^1_{\cusp}(Y_k,\C) =0 $$ for all $k$.
On the other hand,
if there exists at least one newform on $\Gamma(\p^k)$ for some $k$,
then a consideration of the associated oldforms shows that 
$$H^1_{\cusp}(Y_k,\V_k) \gg p^{k}$$
as $k \rightarrow \infty$.
Are there situations in which this lower bound gives the true rate of growth?
}
\end{example}

\begin{remark}
{\em 
Our results are most interesting
in the case when $\Goo$ does not admit any discrete series,
since, as we noted in the introduction, in this case (and only
in this case),
$\Goo$ admits (non-discrete series) tempered representations
of cohomological type.

On the other hand, Theorem~\ref{thm:rank} does have a
consequence in the case when $\Goo$ admits discrete series
which may be of some interest.  Recall the following
result from \cite{Savin} (established in \cite{DW}
in the cocompact case):  if $\pi \in \Ghat$ lies in the
discrete series, then
\begin{equation}
\label{eqn:ds}
m(\Gamma(\pi^k),\pi) = d(\pi) V(\p^k) + o(V(\p^k))
\end{equation}
as $k \rightarrow \infty$.
Fix a finite-dimensional representation $W$ of $\Goo,$
and let $\Ghat(W)_d$ denote the subset of $\Ghat$ consisting
of discrete series representations that contribute to
cohomology with coefficients in $W$.
Summing over all $\pi \in \Ghat(W)_d$, we obtain the formula
\begin{equation}
\label{eqn:ds sum}
\dfrac{1}{|\Ghat(W)_d|} \sum_{\pi \in \Ghat(W)_d} m(\Gamma(\pi^k),\pi) =
 d(\pi) V(\p^k) + o(V(\p^k)).
\end{equation}
(A result first proved in \cite{RS}.)
The following result provides an improvement in the error
term of~(\ref{eqn:ds sum}).
}
\end{remark}

\begin{theorem}
\label{thm:ds}
There exists $\mu > 0$ such that
$$\dfrac{1}{|\Ghat(W)_d|} \sum_{\pi \in \Ghat(W)_d} m(\Gamma(\pi^k),\pi) =
d(\pi) V(\p^k) + O(V(\p^k)^{1-\mu}).$$
\end{theorem}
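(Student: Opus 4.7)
The plan is to extract an asymptotic for the middle-dimensional Betti number from the Euler characteristic, then apply Matsushima's formula together with Theorem~\ref{theorem:main} to isolate the discrete-series contribution. Throughout, set $n_0 := \tfrac{1}{2}\dim(\Goo/\Koo)$, and (after passing to a cofinal subsequence of the $\p$-power tower if necessary) write $V_k$ for the volume of the arithmetic quotient at level $\p^k$.

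First, by Theorem~\ref{theorem:third main}, $\dim_E H^n(Y_k,\V_k)_E \ll V_k^{1-1/d}$ for every $n \neq n_0$; summing the Betti numbers with alternating signs thus gives
$$\chi(Y_k,\V_k) = (-1)^{n_0}\dim_E H^{n_0}(Y_k,\V_k)_E + O\bigl(V_k^{1-1/d}\bigr).$$
On the other hand, Harder's Gauss--Bonnet formula yields the exact identity $\chi(Y_k,\V_k) = V_k\cdot\chi_{EP}(\Goo,W)$, and Hirzebruch proportionality (valid since $\Goo$ admits discrete series) identifies $\chi_{EP}(\Goo,W) = (-1)^{n_0}\sum_{\pi \in \Ghat(W)_d} d(\pi)$. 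Hence
$$\dim_E H^{n_0}(Y_k,\V_k)_E = V_k\sum_{\pi \in \Ghat(W)_d} d(\pi) + O\bigl(V_k^{1-1/d}\bigr).$$

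Second, Matsushima's formula combined with Borel--Wallach (each $\pi \in \Ghat(W)_d$ contributes one-dimensional $(\mathfrak{g},\mathfrak{k})$-cohomology in degree $n_0$) gives
$$\dim_E H^{n_0}_{\cusp}(Y_k,\V_k)_E = \sum_{\pi \in \Ghat(W)_d} m(\pi,\Gamma(\p^k)) + \sum_{[\pi] \in S'} m(\pi,\Gamma(\p^k))\,\dim H^{n_0}(\mathfrak{g},\mathfrak{k};\pi \otimes W),$$
where $S'$ is the finite set of cohomological cuspidal classes $[\pi]$ that are \emph{not} discrete series. Each such $\pi$ contributes to cohomology in some degree different from $n_0$, so Theorem~\ref{theorem:main} bounds $m(\pi,\Gamma(\p^k)) \ll V_k^{1-1/\dim(\Goo)}$; the $S'$-sum is therefore $O\bigl(V_k^{1-1/\dim(\Goo)}\bigr)$.

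The main obstacle is upgrading the Rohlfs--Speh bound $\dim_E\bigl(H^{n_0}(Y_k,\V_k)_E/H^{n_0}_{\cusp}(Y_k,\V_k)_E\bigr) = o(V_k)$ used in the proof of Theorem~\ref{theorem:third main} to a power saving of the form $O(V_k^{1-\mu'})$ for some $\mu' > 0$. I would produce such a $\mu'$ by invoking Franke's description of the non-cuspidal cohomology via parabolic induction from proper Levi subgroups: the associated boundary arithmetic quotients along the $\p$-tower have volumes bounded by a strictly smaller power of $V_k$, since each Levi has strictly smaller dimension than $\Goo$. Equivalently, one may rerun the Iwasawa-theoretic bootstrap of Theorems~\ref{thm:rank} and~\ref{theorem:third main} on the Eisenstein component directly, using that it arises from admissible representations of analytic pro-$p$ subgroups of strictly smaller Iwasawa dimension. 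Granting either method, combining the three displays above and dividing by $|\Ghat(W)_d|$ yields the theorem with $\mu = \min\{1/d,\ 1/\dim(\Goo),\ \mu'\} > 0$.
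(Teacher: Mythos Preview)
Your argument is sound in outline but takes a longer route than the paper's. Two simplifications are available. First, Theorem~\ref{thm:rank} already gives $\dim_E H^{n_0}(Y_k,\V_k)_E = C\cdot V(\p^k) + O\bigl(V(\p^k)^{1-1/d}\bigr)$ for some constant $C$, so there is no need to pass through the Euler characteristic, Gauss--Bonnet, or Hirzebruch proportionality; the paper simply identifies $C$ at the very end by comparison with the already-known asymptotic~(\ref{eqn:ds sum}). Second, rather than invoking Theorem~\ref{theorem:main} to control the non-discrete-series cuspidal contributions in degree $n_0$ (which requires the extra check that each such $\pi$ also contributes in some degree $\neq n_0$), the paper observes via \cite{BW} that any such $\pi$ is \emph{non-tempered}, and then applies the elementary matrix-coefficient bound~(\ref{eqn:power}) from \cite{DW}/\cite{SX}. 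That gives a power saving directly, without appealing to the Iwasawa-theoretic machinery on which Theorem~\ref{theorem:main} itself rests. Your route works, but it feeds the main theorems back into a corollary that can be obtained more cheaply.

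On the Eisenstein contribution $H^{n_0}/H^{n_0}_{\cusp}$: you are right to flag this as the genuine obstacle, and your suggested fixes (Franke's filtration, or rerunning the Iwasawa bootstrap on the Levi factors) point in a reasonable direction but are not yet arguments --- the claim that the boundary pieces live over pro-$p$ groups of ``strictly smaller Iwasawa dimension'' along the $\p$-tower would need to be made precise. Note, though, that the paper's own proof does not address this point either: it passes silently from $\dim H^{n_0}$ (controlled by Theorem~\ref{thm:rank}) to the discrete-series sum via $H^{n_0}_{\cusp}$ without bounding the difference by a power of the volume. So on this particular issue your write-up is more scrupulous than the paper, even if neither is complete in the non-cocompact case.
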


\begin{proof}
Let $n = \dfrac{1}{2}\dim(\Goo/\Koo).$
As already noted, it follows from \cite{BW}, (Thm.~5.1, p.~101),
that all non-discrete series contributions to $H^n_{\cusp}(Y_k,\V_k)$
are non-tempered.  The same result shows that each discrete series
has one-dimensional $(\mathfrak g, \mathfrak k)$-cohomology in dimension $n$.
As we recalled in the introduction,
the multiplicity of any non-tempered representations
is bounded by $V(\p^k)^{1 - \mu}$ for some $\mu > 0$ \cite{SX},
and thus, Theorem~\ref{thm:rank} and (the proof of) Lemma~\ref{lem:passage}
show that
$$\dfrac{1}{|\Ghat(W)_d|} \sum_{\pi \in \Ghat(W)_d} m(\Gamma(\pi^k),\pi) =
C \cdot V(\p^k) + O(V(\p^k)^{1-\mu}).$$
Comparing this formula with~(\ref{eqn:ds sum}) yields the theorem.
\end{proof}

\begin{question}
{\em 
Does the result of Theorem~\ref{thm:ds} hold term-by-term?
That is, does~(\ref{eqn:ds}) admit an improvement of the form
$$
m(\Gamma(\pi^k),\pi) \ {\buildrel ? \over =} \ 
d(\pi) V(\p^k) + O(V(\p^k)^{1-\mu})$$
for some $\mu > 0$?
}
\end{question}


\begin{thebibliography}{99}

\small



\bibitem{BW}
Borel, A.; Wallach, N.
Continuous cohomology, discrete subgroups,
and representations of reductive groups.
Annals of Mathematics Studies, 94. Princeton University Press, Princeton, N.J.; University of Tokyo Press, Tokyo, 1980. xvii+388 pp.

\bibitem{CD}
Calegari, F.; Dunfield, N.
Automorphic forms and rational homology 3-spheres.
Geom. Topol. 10 (2006), 295--329.


\bibitem{DW}
DeGeorge, D.; Wallach, N.
Limit formulas for multiplicities in $L\sp{2}(\Gamma \backslash G)$.
Ann. Math. (2) 107 (1978), no. 1, 133--150. 

\bibitem{Emerton}
Emerton, M.
On the interpolation of systems of eigenvalues attached to automorphic Hecke eigenforms.
Invent. Math. 164 (2006), no. 1, 1--84. 

\bibitem{Fr}
Franke, J.
Harmonic Analysis in Weighted $L_2$-spaces.
 Ann. Sci. École Norm. Sup. (4)  31  (1998),  no. 2, 181--279.


\bibitem{Harris}
Harris, M.
Correction to: ``$p$-adic representations arising from descent on abelian varieties''
 [Compositio Math. 39 (1979), no. 2, 177--245].
Compositio Math. 121 (2000), no. 1, 105--108. 

\bibitem{L}
Lazard, M.
Groupes analytiques $\p$-adiques,
Publ.~Math.~IHES 26 (1965).

\bibitem{RS}
Rohlfs, J.; Speh, B.
On limit multiplicities of representations with cohomology in the cuspidal spectrum.
Duke Math. J. 55 (1987), no. 1, 199--211.

\bibitem{SX}
Sarnak, P.; Xue, X.
Bounds for multiplicities of automorphic representations.
Duke Math. J. 64 (1991), no. 1, 207--227. 

\bibitem{Savin}
Savin, G.
Limit multiplicities of cusp forms.
Invent. Math. 95 (1989), no. 1, 149--159. 

\bibitem{ST}
Schneider, P.; Teitelbaum, J.
Banach space representations and Iwasawa theory,
Israel.~J.~Math.~127 (2002), 359--380.

\end{thebibliography}
\end{document}